\newtheorem{dummy}{anything}[section]
\newtheorem{theorem}[dummy]{Theorem}
\newtheorem{lemma}[dummy]{Lemma}
\newtheorem{proposition}[dummy]{Proposition}
\newtheorem{corollary}[dummy]{Corollary}
\theoremstyle{definition}
 \newtheorem{example}[dummy]{Example}
 \newtheorem{remark}[dummy]{Remark}
\newcommand{\z}{\mathbb Z}
\newcommand{\w}{\widetilde}
\newcommand{\rp}{\mathbb R \mathrm P}
\newcommand{\cp}{\mathbb C \mathrm P}
\newcommand{\rpi}{\rp^{\infty}}
\newcommand{\cpi}{\cp^{\infty}}
\newcommand{\hcp}{H\cp^3}
\begin{document}
\title{A note on the $\z_2$-equivariant Montgomery-Yang correspondence}
\author{Yang Su}
\address{Hua Loo-Keng Key Laboratory of Mathematics
\newline \indent
Chinese Academy of Sciences
\newline\indent
Beijing, 100190, China} \email{suyang{@}math.ac.cn}

\date{August 19, 2009}

\begin{abstract}
In this paper, a classification of free involutions on
$3$-dimensional homotopy complex projective spaces is given. By the
$\z_2$-equivariant Montgomery-Yang correspondence, we obtain all
smooth involutions on $S^6$ with fixed-point set an embedded $S^3$.
\end{abstract}

\maketitle

\section{Introduction}\label{sec:one}
In \cite{M-Y}, Montgomery and Yang established a $1:1$
correspondence between the set of isotopy classes of smooth
embeddings $S^3 \hookrightarrow S^6$, $C_3^3$, and the set of
diffeomorphism classes of smooth manifolds homotopy equivalent to
the $3$-dimensional complex projective space $\cp^3$ ( these
manifolds are called homotopy $\cp^3$). It is known that the latter
set is identified with the set of integers by the first Pontrjagin
class of the manifold. Therefore so is the set $C_3^3$.

In a recent paper \cite{Lv-Li}, Bang-he Li and Zhi L\"u established
a $\z_2$-equivariant version of the Montgomery-Yang correspondence.
Namely, they proved that there is a $1:1$ correspondence between the
set of smooth involutions on $S^6$ with fixed-point set an embedded
$S^3$ and the set of smooth free involutions on homotopy $\cp^3$.
This correspondence gives a way of studying involutions on $S^6$
with fixed-point set an embedded $S^3$ by looking at free
involutions on homotopy $\cp^3$. As an application, by combining
this correspondence and a result of Petrie \cite{Petrie}, saying
that there are infinitely many homotopy $\cp^3$'s which admit
 free involutions, the authors constructed
infinitely many counter examples for the Smith conjecture, which
says that only the unknotted $S^3$ in $S^6$ can be the fixed-point
set of an involution on $S^6$.

\smallskip

In this note we study the classification of the orbit spaces of free
involutions on homotopy $\cp^3$. As a consequence, we get the
classification of free involutions on homotopy $\cp^3$, and further
by the $\z_2$-equivariant Montgomery-Yang correspondence, the
classification of involutions on $S^6$ with fixed-point set an
embedded $S^3$.

\smallskip

The manifolds $X^6$ homotopy equivalent to $\cp^3$ are classified up
to diffeomorphism by the first Pontrjagin class $p_1(X)
=(24j+4)x^2$, $j \in \z$, where $x^2$ is the canonical generator of
$H^4(X;\z)$ (c.~f.~\cite{M-Y}, \cite{Wall6}). We denote the manifold
with $p_1 =(24j+4)x^2$ by $\hcp_j$.

\begin{theorem}\label{thm:one}
The manifold $\hcp_j$ admits a (orientation reversing) smooth free
involution if and only if $j$ is even. On every $\hcp_{2k}$, there
are exactly two free involutions up to conjugation.\footnote{The
same result was also obtained independently by Bang-he Li
(unpublished).}
\end{theorem}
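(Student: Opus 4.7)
My plan is to classify free involutions $\tau$ on $X = \hcp_j$ by classifying their orbit spaces $Y = X/\tau$, closed $6$-manifolds with $\pi_1(Y)=\z_2$ and universal cover $X$; two free involutions are conjugate iff the orbit spaces are diffeomorphic compatibly with the identification of the deck group. As a first observation, every such $\tau$ reverses orientation: since $\tau$ is fixed-point free, its Lefschetz number vanishes, and expanding $L(\tau)=\sum(-1)^i\operatorname{tr}(\tau^*\mid H^i(X;\z))$ with $H^*(X;\z)=\z[x]/(x^4)$, $|x|=2$, and $\tau^*x=\epsilon x$ yields $L(\tau)=2+2\epsilon$, forcing $\epsilon=-1$. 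Hence $\tau^*x^3=-x^3$, so $Y$ is non-orientable and $\pi_1(Y)=\z_2$ acts on $\pi_2(Y)=\z$ by $-1$.

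Next I would identify the homotopy $2$-type. The Borel construction $\BB = E\z_2 \times_{\z_2} \cpi$, with $\z_2$ acting on $\cpi$ by complex conjugation, is a $\cpi$-bundle over $\rpi$ realizing precisely this $(\pi_1,\pi_2)$-data, and every $Y$ as above admits a classifying map $f: Y \to \BB$ inducing isomorphisms on $\pi_1$ and $\pi_2$. I would compute $H^*(\BB;\z)$ by the Serre spectral sequence of $\cpi \to \BB \to \rpi$ (the $\z_2$-action on $H^{2k}(\cpi;\z)$ being $(-1)^k$), and identify the class in $H^4(\BB;\z)$ that restricts to $x^2$ on the fiber. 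A characteristic-class computation in $H^*(\BB;\z)$ (using Wu's formula for the non-orientable $Y$, possibly combined with a $\Pin$ or reduced $\Spin$ bordism argument), together with the naturality identity $\pi^*p_1(Y) = p_1(X) = (24j+4)x^2$, then yields the divisibility constraint that forces $j$ to be even.

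For the counting of conjugacy classes I would carry out a Kreck-style modified surgery analysis with $\BB$ as the normal $2$-type, refined by the appropriate $\Pin^{\pm}$-structure dictated by $w_2(Y)$. The relevant bordism group (a $w_2$-twisted variant of $\Omega_6^{\Pin}(\BB)$), computed via an Atiyah--Hirzebruch spectral sequence, classifies pairs $(Y,f)$ up to bordism; I expect the summand pertinent to our problem to be $\z \oplus \z_2$, with $\z$ recording $p_1(\tilde Y)$ (hence $j$) and $\z_2$ producing exactly two conjugacy classes per even $j$. Existence of both classes would be established by combining the standard quaternionic involution on $\cp^3=\hcp_0$ with Petrie's examples and modifying by surgeries that toggle the $\z_2$-invariant; uniqueness would follow from the vanishing of the relevant odd-dimensional $L$-group governing the surgery obstruction. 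The main obstacle, in my view, is this last step: fixing the correct $\Pin^{\pm}$-decoration, computing the bordism group with these decorations, and pinpointing the $\z_2$-invariant that separates the two conjugacy classes.
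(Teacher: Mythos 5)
Your overall strategy---classify free involutions via their orbit spaces, starting from the Lefschetz argument that $\tau$ reverses orientation---is exactly the paper's, and that first step is correct. But the proposal breaks at the identification of the homotopy $2$-type. Complex conjugation on $\cpi$ is not free: its fixed set is $\rpi$, and a fixed point gives a section of the fibration $\cpi \to \BB \to \rpi$. Since the fiber is a $K(\z,2)$, the only obstruction to a section is the $k$-invariant in $H^3(\z_2;\z_-)\cong \z_2$, so the $k$-invariant of your Borel construction is the \emph{trivial} one; its Serre spectral sequence collapses and $t^3 \neq 0$ in $H^*(\BB;\z_2)$. The orbit space $M$ of a free involution has the \emph{other} $2$-type: in the spectral sequence of $\w{M} \to M \to \rpi$ the differential $d_3 \colon E_3^{0,2} \to E_3^{3,0}$ must be an isomorphism, for otherwise multiplicativity forces collapse at $E_3$ and $H^*(M;\z_2)\neq 0$ above degree $6$, impossible for a closed $6$-manifold; hence $t^3=0$ and the $k$-invariant of $M$ is nontrivial. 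Consequently no map $Y \to \BB$ inducing isomorphisms on $\pi_1$ and $\pi_2$ exists (it would be an equivalence of $2$-types with distinct $k$-invariants), and any bordism theory fibered over $\BB$ classifies the wrong objects. The correct model is $Q=(\cpi \times S^{\infty})/(\lambda,-1)$ with $\lambda[z_0,z_1,z_2,z_3,\dots]=[-z_1,z_0,-z_3,z_2,\dots]$ a \emph{free} involution. Moreover, once $P_2(M)=Q$ is in hand, the vanishing $\pi_i(M)=0$ for $3\le i\le 6$ makes $M \to Q$ a $7$-equivalence, so all orbit spaces are homotopy equivalent to the single model $M_0$ and one can bypass modified surgery entirely.

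Even after re-basing on $Q$, the steps that would produce the actual answer are left as expectations in your sketch, and the crucial factor of $2$ is unlikely to fall out of characteristic numbers over the $2$-type. The paper obtains it from the classical surgery exact sequence for $M_0$: $L_7(\z_2^-)=0$ and $L_6(\z_2^-)\cong\z_2$ is hit, by Sullivan's formula $c(M_0,g)=\langle w(M_0)\cdot g^*\kappa,[M_0]\rangle=\langle g^*k_6,[M_0]\rangle$ with $w(M_0)=1+t+t^2$, giving $\mathscr S(M_0)\cong [M_0,Y]$ for $Y=K(\z_2,2)\times_{\delta Sq^2}K(\z,4)$. The constraint that $j$ be even (i.e.\ $p_1\equiv 4 \bmod 48$) then comes from the vanishing of $H^2(M_0;\z_2)\to H^2(\cp^3;\z_2)$ combined with the non-split extension $L_4(1)\to [\cp^2,Y]\to[\cp^1,Y]$ from Sullivan's analysis---a statement about the low-dimensional $k$-invariant of $G/PL$, not a bordism characteristic number---while the kernel $\z_2$ of $s_4\circ\pi^*$ produces exactly the two structures per value of $p_1$, with existence realized through maps factoring as $M_0\to K(\z,4)\to Y$ rather than through Petrie's examples. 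Finally, to pass from structures (or your bordism classes of pairs $(Y,f)$) to diffeomorphism classes and hence conjugacy classes of involutions, one must control self-equivalences; the paper proves $Aut(M_0)$ is trivial via a CW structure on $M_0$ with $2$-skeleton $\rp^2$ and an obstruction-theory argument, a step your outline omits.
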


\begin{corollary}\label{coro:one}
An embedded $S^3$ in $S^6$ is the fixed-point set of an involution
on $S^6$ if and only if its Montgomery-Yang correspondence is
$\hcp_{2k}$. For each embedding satisfying the condition, there are
exactly two such involutions up to conjugation.
\end{corollary}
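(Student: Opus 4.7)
The plan is to deduce the corollary almost immediately by combining Theorem \ref{thm:one} with the $\z_2$-equivariant Montgomery--Yang correspondence of Li and L\"u \cite{Lv-Li} recalled in the introduction. The latter gives a bijection between the set of smooth involutions on $S^6$ with fixed-point set an embedded $S^3$ and the set of smooth free involutions on homotopy $\cp^3$; moreover, this bijection is compatible with the original (non-equivariant) Montgomery--Yang correspondence in the sense that an involution on $S^6$ fixing a given embedded $S^3$ is sent to a free involution on the homotopy $\cp^3$ associated to that $S^3$.

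Granting this compatibility, I would argue the existence part as follows. An embedded $S^3 \hookrightarrow S^6$ is the fixed-point set of some involution on $S^6$ if and only if the homotopy $\cp^3$ assigned to it by the original Montgomery--Yang correspondence admits a smooth free involution. By Theorem \ref{thm:one}, this occurs exactly when that homotopy $\cp^3$ is $\hcp_{2k}$ for some $k\in \z$, giving the first assertion.

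For the counting statement, I would note that on both sides of the Li--L\"u correspondence the natural equivalence is conjugation by an (equivariant) diffeomorphism of the ambient manifold. Since the correspondence is constructed by passage to the $\z_2$-orbit space, a conjugacy between two involutions on $S^6$ descends to a conjugacy of the induced free involutions on the quotient homotopy $\cp^3$, and an equivariant diffeomorphism between quotients lifts to one of $S^6$; hence the bijection descends to one on conjugacy classes. Theorem \ref{thm:one} supplies exactly two conjugacy classes of free involutions on each $\hcp_{2k}$, so the same holds for involutions on $S^6$ with the given admissible $S^3$ as fixed-point set.

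The only genuine point requiring care is the claim that the Li--L\"u bijection respects the conjugation equivalence relation. I expect this to be a transparent consequence of their construction, but I would spell out explicitly the lifting/descent of equivariant diffeomorphisms between $S^6$ and its orbit space in order to make the passage from Theorem \ref{thm:one} to Corollary \ref{coro:one} unambiguous.
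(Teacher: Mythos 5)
Your argument is correct and is exactly the route the paper intends: the paper derives Corollary \ref{coro:one} directly from Theorem \ref{thm:one} via the $\z_2$-equivariant Montgomery--Yang correspondence of Li and L\"u, without spelling out the details. Your additional care about the correspondence descending to conjugacy classes (via passage to orbit spaces and lifting of equivariant diffeomorphisms) is a legitimate point that the paper leaves implicit, and your treatment of it is sound.
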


Theorem \ref{thm:one} is a consequence of a classification theorem
(Theorem \ref{thm:two}) of the orbit spaces. Theorem \ref{thm:two}
will be shown in Section $3$ by the classical surgery exact sequence
of Browder-Novikov-Sullivan-Wall. In Section $2$ we show some
topological properties of the orbit spaces, which will be needed in
the solution of the classification problem.

\section{Topology of the Orbit Space}

In this section we summarize the topological properties of the orbit
space of a smooth free involution on a homotopy $\cp^3$. Some of the
properties are also given in \cite{Lv-Li}. Here we give shorter
proofs from a different point of view and in a different logical
order.

Let $\tau$ be a smooth free involution on $\hcp$, a homotopy
$\cp^3$. Denote the orbit manifold by $M$.

\begin{example}
The $3$-dimensional complex projective space $\cp^3$ can be viewed
as the sphere bundle of a $3$-dimensional real vector bundle over
$S^4$. The fiberwise antipodal map $\tau_0$ is a free involution on
$\cp^3$ (c.~f.~\cite[A.1]{Petrie}). Denote the orbit space by $M_0$.
\end{example}

As a consequence of the Lefschetz fixed-point theorem and the
multiplicative structure of $H^*(H\cp^3)$, we have

\begin{lemma}\cite[Theorem 1.4]{Lv-Li}
$\tau$ must be orientation reversing.
\end{lemma}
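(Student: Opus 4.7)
The plan is to rule out the orientation-preserving case via a direct Lefschetz number computation, using that the cohomology ring of $H\cp^3$ is $\z[x]/(x^4)$ with $\deg x = 2$, the same as for $\cp^3$.

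First I would fix a generator $x \in H^2(H\cp^3;\z) \cong \z$, so that $x^2$ and $x^3$ generate $H^4$ and $H^6$ respectively. Since $\tau$ is a self-homeomorphism, $\tau^* x = \epsilon x$ for some $\epsilon \in \{+1,-1\}$, and by multiplicativity $\tau^*(x^k) = \epsilon^k x^k$. In particular $\tau^*$ acts on $H^6 \cong \z$ (generated by $x^3$) by multiplication by $\epsilon^3 = \epsilon$, so the sign of $\epsilon$ is exactly the degree of $\tau$.

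Next I would assume, toward a contradiction, that $\tau$ is orientation preserving, i.e.\ $\epsilon = +1$. Then $\tau^*$ acts as the identity on every even cohomology group of $H\cp^3$ (and trivially on $H^0$; the odd groups vanish). The Lefschetz number is therefore
\[
L(\tau) \;=\; \sum_{i=0}^{6} (-1)^i \operatorname{tr}\bigl(\tau^*\mid H^i(H\cp^3;\mathbb{Q})\bigr) \;=\; 1+1+1+1 \;=\; 4 \;\neq\; 0.
\]
By the Lefschetz fixed-point theorem, $\tau$ would have to possess a fixed point, contradicting the hypothesis that $\tau$ is free. Hence $\epsilon=-1$ and $\tau$ reverses orientation.

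There is essentially no serious obstacle here: the argument is a two-line consequence of the ring structure of $H^*(H\cp^3)$ together with the Lefschetz fixed-point theorem. The only point worth stating carefully is that one cannot simply assume $\tau^* = \operatorname{id}$ on all of $H^*$; one needs the observation that the single sign $\epsilon$ on $H^2$ determines the action on $H^4$ and $H^6$ via the cup product, which is why the hypothesis on the multiplicative structure is the essential input.
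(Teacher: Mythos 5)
Your proof is correct and is exactly the argument the paper intends: the paper's lemma is stated as ``a consequence of the Lefschetz fixed-point theorem and the multiplicative structure of $H^*(H\cp^3)$,'' and your computation (the sign $\epsilon$ on $H^2$ propagates by cup products, forcing $L(\tau)=4\neq 0$ in the orientation-preserving case) fills in precisely that sketch. Nothing to add.
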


\begin{lemma}\label{lemma:cohom}
The cohomology ring of $M$ with $\z_2$-coefficients is
$$H^*(M;\z_2)=\z_2[t,q]/(t^3=0, q^2=0),$$ where $|t|=1$, $|q|=4$.
\end{lemma}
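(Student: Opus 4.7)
My plan is to run the Serre spectral sequence of the fibration $\hcp \to M \to B\z_2 = \rpi$ with $\z_2$ coefficients, where $M \to \rpi$ is the classifying map of the free double cover $\hcp \to M$. Since $\hcp$ is simply connected, this identification is straightforward from $\pi_1(M) \cong \z_2$.

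The $E_2$-page is essentially forced. The monodromy action of $\pi_1(\rpi) = \z_2$ on $H^*(\hcp;\z_2) = \z_2[x]/(x^4)$ is trivial, because each $H^{2i}(\hcp;\z_2) \cong \z_2$ admits only the identity automorphism. Hence
$$E_2^{p,q} = \z_2[t] \otimes \z_2[x]/(x^4),$$
with $t \in E_2^{1,0}$ and $x \in E_2^{0,2}$; since the nonzero rows are in even $q$, $d_2 = 0$ and $E_3 = E_2$.

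The decisive step, and the one I expect to be the main obstacle, is to determine $d_3(x) \in E_3^{3,0} = \z_2 \cdot t^3$. I would show $d_3(x) = t^3$ by contradiction: if $d_3(x) = 0$, then $d_3$ vanishes on both generators (and hence on all of $E_3$ by the Leibniz rule), and for $r \geq 5$ the target $E_r^{r, 3-r} = 0$ forces $d_r(x) = 0$ as well. Then $E_\infty = E_2$ would be infinite-dimensional, contradicting the fact that $M$ is a compact $6$-manifold.

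Once $d_3(x) = t^3$ is in hand, the Leibniz rule mod $2$ gives $d_3(t^p x^k) = k\, t^{p+3} x^{k-1}$; a routine bookkeeping shows $E_4 = E_\infty$ has rank one in the six positions $(p, 0)$ and $(p, 4)$ with $p \in \{0, 1, 2\}$ and vanishes elsewhere. I then take $t \in H^1(M;\z_2)$ to be the pullback of the generator of $H^1(\rpi;\z_2)$ and $q \in H^4(M;\z_2)$ to be the nonzero element. The relations $t^3 = 0$ (from $E_\infty^{3, 0} = 0$) and $q^2 = 0$ (forced by $\dim M = 6$) are immediate, and $\{1, t, t^2, q, tq, t^2 q\}$ forms a basis, giving the desired presentation $\z_2[t, q]/(t^3, q^2)$.
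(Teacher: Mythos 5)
Your proposal is correct and follows essentially the same route as the paper: the Serre spectral sequence of $\w{M} \to M \to \rpi$ with $\z_2$-coefficients, vanishing of $d_2$ for degree reasons, and the key step $d_3(x)=t^3$ established by contradiction with the finite-dimensionality of $H^*(M;\z_2)$ (the paper phrases this as collapse at $E_3$ forcing nontrivial cohomology above degree $6$). Your write-up merely spells out the bookkeeping on the $E_4=E_\infty$ page and the ring relations that the paper leaves as ``easy to see.''
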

\begin{proof}
Note the the fundamental group of $M$ is $\z_2$. There is a
fibration $\w{M} \to M \to \rp^{\infty}$, where $M \to \rp^{\infty}$
is the classifying map of the covering. We apply the Leray-Serre
spectral sequence. Since $\w{M}$ is homotopy equivalent to $\cp^3$,
the nontrivial $E_2$-terms are $E_2^{p,q}=H^p(\rp^{\infty};\z_2)$
for $q=0,2,4,6$. Therefore all differentials $d_2$ are trivial, and
henceforth $E_2=E_3$. Now the differential $d_3 \colon E_3^{0,2} \to
E_3^{3,0}$ must be an isomorphism. For otherwise the multiplicative
structure of the spectral sequence implies that the spectral
sequence collapses at the $E_3$-page, which implies that
$H^*(M;\z_2)$ is nontrivial for $* >6$, a contradiction. Then it is
easy to see that $M$ has the claimed cohomology ring.
\end{proof}

\begin{remark}
There is an exact sequence (cf.~\cite{Brown})
$$H_3(\z/2) \to \z \otimes_{\z[\z/2]} \z_- \to H_2(M) \to H_2(\z/2),$$
where $\z_-$ is the nontrivial $\z[\z_2]$-module. By this exact
sequence, $H_2(M)$ is either $\z_2$ or trivial. $H^2(M;\z_2)\cong
\z_2$ implies that $H_2(M) =0$. This was shown in \cite[Lemma
2.1]{Lv-Li} by geometric arguments.
\end{remark}

Now let's consider the Postnikov system of $M$. Since $\pi_1(M)
\cong \z_2$, $\pi_2(M) \cong \z$ and the action of $\pi_1(M)$ on
$\pi_2(M)$ is nontrivial, following \cite{Baues}, there are two
candidates for the second space $P_2(M)$ of the Postnikov system,
which are distinguished by their homology groups in low dimensions.
See \cite[pp.265]{Olbermann} and \cite[Section 2A]{Su}.

Let $\lambda$ be the free involution on $\cpi$, mapping $[z_0, z_1,
z_2, z_3, \cdots]$ to $[-z_1, z_0, -z_3, z_2, \cdots]$. Let $Q =
(\cpi \times S^{\infty})/(\lambda, -1)$, where $-1$ denotes the
antipodal map on $S^{\infty}$, then there is a fibration $\cpi \to Q
\to \rpi$ which corresponds to the nontrivial $k$-invariant. Lemma
\ref{lemma:cohom} implies that $P_2(M)=Q$ since $Q$ has the same
homology as $M$ in low dimensions.

Let $f_2 \colon M \to Q$ be the second Postnikov map, since
$\pi_i(M) \cong \pi_i(\cp^3)=0$ for $3 \le i \le 6$, $f_2$ is
actually a $7$-equivalence and $Q$ is the $6$-th space of the
Postnikov system of $M$. By the formality of constructing the
Postnikov system, all the orbit spaces have the same Postnikov
system. Therefore we have shown
\begin{proposition}\cite[Lemma 3.2]{Lv-Li} \label{prop:hmtp}
The orbit spaces of free involutions on homotopy $\cp^3$ are all
homotopy equivalent.
\end{proposition}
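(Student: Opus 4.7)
The plan is to exhibit, for any two orbit spaces $M$ and $M'$ of free involutions on homotopy $\cp^3$'s, an explicit homotopy equivalence $g \colon M \to M'$, by lifting the Postnikov map $M \to Q$ up the Postnikov tower of $M'$ and then showing the result is a weak equivalence.

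Regarding $Q$ as the $6$-th Postnikov stage $P_6(M')$, I would attempt to lift the $7$-equivalence $f_2 \colon M \to Q$ inductively through
$$\cdots \to P_{n+1}(M') \to P_n(M') \to \cdots \to P_6(M') = Q.$$
The obstruction to extending the lift across $P_n(M') \leftarrow P_{n+1}(M')$ lives in $H^{n+2}(M;\pi_{n+1}(M'))$ with the appropriate $\pi_1(M)$-module structure on the coefficients. Since $M$ has the homotopy type of a $6$-dimensional CW complex, all of these groups vanish as soon as $n \ge 5$, so the induction runs through without obstruction and yields $g \colon M \to M'$ compatible with the Postnikov maps down to $Q$. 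By construction, $g_*$ is an isomorphism on $\pi_i$ for $i \le 6$.

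To finish, I would check that $g$ is a genuine homotopy equivalence by passing to universal covers. The lift $\tilde g \colon \w M \to \w{M'}$ is a map between two spaces each homotopy equivalent to $\cp^3$, and it fits into a commutative square with the canonical Postnikov maps $\w M \to \cpi$ and $\w{M'} \to \cpi$. In particular $\tilde g$ acts by $\pm 1$ on $H^2$, and therefore, by the multiplicative structure of $H^*(\cp^3;\z)$, as $\pm 1$ on every $H^{2i}$. Hence $\tilde g$ is an isomorphism on integral (co)homology; since source and target are simply connected CW complexes, this makes $\tilde g$ a weak equivalence and therefore a homotopy equivalence. Equivariance then descends this to a homotopy equivalence $g \colon M \to M'$.

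The step I expect to demand the most care is handling the local coefficients correctly throughout the obstruction-theoretic lifting, and making sure the universal-cover diagram really forces $\tilde g$ to be a degree $\pm 1$ self-map of $\cp^3$ rather than merely some self-map compatible with the $2$-type; once those bookkeeping issues are discharged, the dimension bound kills the relevant obstructions and the classical Whitehead theorem delivers the conclusion.
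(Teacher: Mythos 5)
Your proposal is correct and takes essentially the same route as the paper: the paper's proof consists precisely of identifying $Q$ as the common $6$-th Postnikov stage of all orbit spaces via $7$-equivalences and then asserting the conclusion, which is exactly the step your obstruction-theoretic lifting (with local coefficients, killed by $\dim M = 6$) and universal-cover Whitehead argument make explicit. Your observation that $\w{g}$ must be checked to be a degree $\pm 1$ self-map of $\cp^3$ via the cup-product structure, rather than merely compatible with the $2$-type, is the right way to fill in the detail the paper compresses into ``therefore we have shown.''
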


Now let us consider the characteristic classes of $M$.

\begin{lemma}
The total Stiefel-Whitney class of $M$ is $w(M)=1+t+t^2$, where $t
\in H^1(M;\z_2)$ is the generator.
\end{lemma}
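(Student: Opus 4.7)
The strategy is to reduce the computation to the explicit orbit space $M_0$ defined in the example above, exploiting homotopy invariance. By Proposition~\ref{prop:hmtp} every orbit space $M$ is homotopy equivalent to $M_0$, and the total Stiefel-Whitney class of a closed manifold is determined, through Wu's formula $w = \mathrm{Sq}(v)$, by the mod-$2$ cohomology ring as an algebra over the Steenrod algebra together with the fundamental class. It therefore suffices to verify $w(M_0) = 1 + t + t^2$.

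Recall $M_0 = \rp(V)$ is the projectivization of the rank-$3$ real vector bundle $V \to S^4$ whose unit sphere bundle is $\cp^3$. Since $H^i(S^4;\z_2) = 0$ for $i = 1, 2, 3$, we have $w(V) = 1$, and the projective bundle formula gives $H^*(M_0;\z_2) = \z_2[t,s]/(t^3, s^2)$, where $t = w_1(\gamma)$ for $\gamma \to \rp(V)$ the tautological real line bundle and $s$ is pulled back from the generator of $H^4(S^4;\z_2)$. The tangent bundle decomposes as $TM_0 = T_f M_0 \oplus \pi^* T S^4$, and $w(\pi^* T S^4) = 1$ since $T S^4$ is stably trivial.

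For the vertical tangent bundle, tensoring the tautological sequence $0 \to \gamma \to \pi^* V \to \pi^* V / \gamma \to 0$ with $\gamma^* \cong \gamma$ produces the Euler sequence
\[
0 \to \underline{\mathbb{R}} \to \gamma \otimes \pi^* V \to T_f M_0 \to 0,
\]
so $w(T_f M_0) = w(\gamma \otimes \pi^* V)$. Because $w(V) = 1$, the Stiefel-Whitney class of the tensor product of a line bundle with a mod-$2$ trivial bundle collapses to $(1+t)^{\rk V} = (1+t)^3 = 1 + t + t^2 + t^3$, and the last term drops out because $t^3 = 0$ in $H^*(M_0;\z_2)$. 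Hence $w(M_0) = 1 + t + t^2$, and by the first paragraph the same formula holds for $M$; the only mildly delicate point is the identification of $t$ under the homotopy equivalence $M \simeq M_0$, which is automatic since $H^1(M;\z_2)$ has a unique nontrivial element.
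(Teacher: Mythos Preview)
Your argument is correct, and it takes a genuinely different route from the paper's.  The paper works directly with Wu classes: it observes $w_1(M)=t$ from nonorientability, checks (using the $M_0$ model) that $Sq^2\colon H^4(M;\z_2)\to H^6(M;\z_2)$ vanishes so that $v_2(M)=0$, and then reads off $w=Sq(1+t)=1+t+t^2$.  You instead transplant the whole problem to $M_0$ via Proposition~\ref{prop:hmtp} and compute $w(TM_0)$ geometrically from the projective-bundle description, using the Euler sequence for $\rp(V)$ and the splitting-principle identity $w(\gamma\otimes\pi^*V)=(1+t)^{\rk V}$ when $w(V)=1$.  Both approaches ultimately lean on the explicit model $M_0$ and on Wu's formula (the paper to pass from $v$ to $w$, you to justify homotopy invariance), so neither is strictly more economical; your version has the virtue of making the bundle-theoretic origin of each Stiefel--Whitney class transparent, while the paper's extracts along the way the Steenrod-algebra information $Sq^2q=0$, which is of independent interest.
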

\begin{proof}
The involution $\tau$ is orientation reversing, therefore $M$ is
nonorientable and $w_1(M)=t$. The Steenrod square $Sq^2 \colon
H^4(M;\z_2) \to H^6(M;\z_2)$ is trivial, this can be seen by looking
at $M_0$, whose $4$-dimensional cohomology classes are pulled back
from $S^4$. Therefore the second Wu class is $v_2(M)=0$. Thus by the
Wu formula $w(M)=Sqv(M)$ it is seen that the total Stiefel-Whitney
class of $M$ is $w(M)=1+t+t^2$.
\end{proof}

Let $\pi \colon \hcp \to M$ be the projection map, the
$\pi^*p_1(M)=p_1(\hcp)$.

\begin{lemma}
The induced map $\pi^* \colon H^4(M) \to H^4(\hcp)$ is an
isomorphism.
\end{lemma}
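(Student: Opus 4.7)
My plan is to reduce the claim to the model involution $\tau_0$ on $\cp^3$ via Proposition~\ref{prop:hmtp}, and then verify it for $\pi_0 \colon \cp^3 \to M_0$ by exploiting that both spaces fiber over $S^4$.

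First I would note that since $\pi_1(M) \cong \z_2$ has a unique subgroup of index $2$, the double cover of $M$ is determined purely by the homotopy type of $M$. Hence the homotopy equivalence $h \colon M \to M_0$ supplied by Proposition~\ref{prop:hmtp} lifts to a homotopy equivalence $\w{h} \colon \hcp \to \cp^3$ making a square with the covering projections $\pi$ and $\pi_0$ commute up to homotopy. (Such a lift exists at the level of spaces because $\hcp$ and $\cp^3$ are abstractly homotopy equivalent, all the $\hcp_j$'s sharing a common homotopy type.) It therefore suffices to prove that $\pi_0^* \colon H^4(M_0; \z) \to H^4(\cp^3; \z)$ is an isomorphism.

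For the model I would use that $\cp^3$ is the unit sphere bundle $S(V)$ of a real $3$-plane bundle $V$ over $S^4$, that $\tau_0$ is the fiberwise antipodal map, and hence that $M_0 = S(V)/\{\pm 1\}$ is the associated $\rp^2$-bundle over $S^4$, with $\pi_0$ covering $\mathrm{id}_{S^4}$. Since $S^4$ is simply connected, the Serre spectral sequences of both bundles have simple coefficients. For $\rp^2 \to M_0 \to S^4$, the nonzero $E_2$-terms sit only in bidegrees $(0,0), (0,2), (4,0), (4,2)$ and all differentials vanish for dimension reasons, giving $H^4(M_0;\z) = E_\infty^{4,0} \cong H^4(S^4) \cong \z$. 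The parallel argument for $S^2 \to \cp^3 \to S^4$ identifies $H^4(\cp^3;\z) \cong \z$ with (up to sign) the pullback of the fundamental class of $S^4$.

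Finally, because $\pi_0$ is a bundle morphism covering $\mathrm{id}_{S^4}$, it induces a morphism of Serre spectral sequences that is the identity on the $q=0$ row, so $\pi_0^*$ carries a generator of $H^4(M_0;\z)$ to a generator of $H^4(\cp^3;\z)$, proving the lemma. The most delicate point is justifying compatibility of $\w{h}$ with the covering projections, but this is automatic: both $\pi$ and $\pi_0$ are classified by the same surjection $\pi_1 \to \z_2$, i.e.~by homotopic classifying maps into $B\z_2 = \rpi$ under the identifications of Proposition~\ref{prop:hmtp}.
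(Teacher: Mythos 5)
Your proof is correct, but it takes a genuinely different route from the paper's. The paper argues intrinsically, for an arbitrary orbit space $M$: it applies the integral Leray--Serre spectral sequence to the fibration $\w{M} \to M \to \rpi$ coming from the classifying map of the double cover, with twisted coefficients $\underline{H^q(\w{M})}$, and imports the facts $H^3(M)=H^3(Q)=0$ and $H^5(M)=H^5(Q)=0$ from Olbermann's computation of $H^*(Q)$ to conclude that $E_\infty^{0,4}=E_2^{0,4}=H^4(\w{M})$ is the only surviving term in total degree $4$; the map $\pi^*$ is then precisely the edge homomorphism (restriction to the fibre, which is $\pi$ up to homotopy), hence an isomorphism. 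You instead transplant the problem to the model via Proposition~\ref{prop:hmtp}, lifting a homotopy equivalence $h \colon M \to M_0$ to the covers, and then compare the two collapsing Serre spectral sequences of $S^2 \to \cp^3 \to S^4$ and $\rp^2 \to M_0 \to S^4$ over the simply connected base. What your route buys: no local coefficients and no reliance on the external computation of $H^*(Q)$; both $H^4$'s are visibly $\z$, generated by the pullback of the generator of $H^4(S^4)$, and $\pi_0^*$ matches generators because it is a bundle map covering $\mathrm{id}_{S^4}$. What it costs is the reduction step, which the paper's intrinsic argument avoids --- and there your stated justification for the lift is slightly off: $\w{h}$ exists not ``because $\hcp$ and $\cp^3$ are abstractly homotopy equivalent,'' but because $\hcp$ is simply connected, so $h \circ \pi$ lifts through the universal covering $\pi_0$ with $\pi_0 \circ \w{h} = h \circ \pi$ on the nose, and the lift is a homotopy equivalence by Whitehead's theorem (it induces isomorphisms on all homotopy groups since $h$ does). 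Your closing remark, that both covers are classified by the unique surjection $\pi_1 \to \z_2$, reaches the same conclusion correctly, so the gap is only in the parenthetical phrasing, not in the substance.
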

\begin{proof}
Apply the Leray-Serre spectral sequence for integral cohomology to
the fibration $\w{M} \to M \to \rpi$, the $E_2$-terms are
$E_2^{p,q}=H^p(\rpi;\underline{H^q(\w{M})})$. It is known that
$H^3(M)=H^3(Q)=0$ and $H^5(M)=H^5(Q)=0$ (for $H^*(Q)$, see
\cite[pp.265]{Olbermann}), therefore
$E_{\infty}^{0,4}=E_2^{0,4}=H^4(\w{M})$ is the only nonzero term in
the line $p+q=4$. This shows that the edge homomorphism is an
isomorphism, which is just $\pi^*$.
\end{proof}

Therefore the first Pontrjagin class of $M$ is $p_1(M)=(24j+4)u$ ($j
\in \z$), where $u=\pi^*(x^2)$ is the canonical generator of
$H^4(M)$.

\section{Classification of the orbit spaces}
By Proposition \ref{prop:hmtp}, every orbit space $M$ is homotopy
equivalent to $M_0$. Thus the set of conjugation classes of free
involutions on homotopy $\cp^3$ is in $1:1$ correspondence to the
set of diffeomorphism classes of smooth manifolds homotopy
equivalent to $M_0$. Denote the latter by  $\mathcal M (M_0)$. Let
$\mathscr{S}(M_0)$ be the smooth structure set of $M_0$, $Aut(M_0)$
be the set of homotopy classes of self-equivalences of $M_0$. There
is an action of $Aut(M_0)$ on $\mathscr S(M_0)$ with orbit set
$\mathcal M(M_0)$. (Since the Whitehead group of $\z_2$ is trivial,
we omit the decoration $s$ all over.)

The surgery exact sequence for $M_0$ is
$$L_7(\z_2^-) \to \mathscr S(M_0) \to [M_0, G/O] \to L_6(\z_2^-).$$
By \cite[Theorem 13A.1]{Wall}, $L_7(\z_2^-)=0$, $L_6(\z_2^-)
\stackrel{c}{\cong} \z_2$, where $c$ is the Kervaire-Arf invariant.
Since $\dim M_0=6$ and $PL/O$ is $6$-connected, we have an
isomorphism $[M_0, G/O] \cong [M, G/PL]$. For a given surgery
classifying map $g \colon M_0 \to G/PL$, the Kervaire-Arf invariant
is given by the Sullivan formula (\cite{Sullivan}, \cite[Theorem
13B.5]{Wall})
\begin{eqnarray*}
c(M_0, g) & = & \langle w(M_0) \cdot g^*\kappa, [M_0] \rangle \\
          & = & \langle (1+t+t^2) \cdot g^*(1+Sq^2+Sq^2Sq^2)(k_2+k_6),
          [M_0] \rangle \\
          & = & \langle g^*k_6, [M_0] \rangle .
\end{eqnarray*}

Now since $M_0$ has only $2$-torsion, and modulo the groups of odd
order we have
$$G/PL \simeq Y \times \prod_{i \ge 2}(K(\z_2, 4i-2) \times
K(\z,4i)),$$ where $Y=K(\z_2,2) \times_{\delta Sq^2} K(\z,4)$, we
have $[M_0, G/PL]=[M_0,Y] \times [M_0, K(\z_2,6)]$. $k_6$ is the
fundamental class of $K(\z_2,6)$. Therefore the surgery exact
sequence implies

\begin{lemma}\label{lemma:str}
$\mathscr S (M_0) \cong  [M_0, Y]$.
\end{lemma}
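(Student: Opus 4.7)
The plan is to read $\mathscr{S}(M_0)$ off the surgery exact sequence using the ingredients assembled just above. Since $L_7(\z_2^-)=0$, the structure map $\mathscr S(M_0) \to [M_0, G/O]$ is injective with image $\ker c$, where $c$ is the Kervaire-Arf obstruction landing in $L_6(\z_2^-)\cong\z_2$. Combining the isomorphism $[M_0, G/O] \cong [M_0, G/PL]$ (valid because $\dim M_0 = 6$ and $PL/O$ is $6$-connected) with the product splitting $[M_0, G/PL] \cong [M_0, Y] \times [M_0, K(\z_2, 6)]$ already displayed, the remaining task is to identify $\ker c$ inside this product.

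For this I would split the Sullivan formula $c(M_0, g) = \langle g^* k_6, [M_0] \rangle$ along the product decomposition. Since $k_6$ is the fundamental class of the $K(\z_2, 6)$ factor of $G/PL$, it restricts trivially to $Y$; hence $c$ vanishes on $[M_0, Y]\times\{0\}$ and reduces on $\{0\}\times[M_0, K(\z_2, 6)]$ to the evaluation map $H^6(M_0;\z_2)\to\z_2$, $\alpha\mapsto\langle\alpha,[M_0]\rangle$. Lemma \ref{lemma:cohom} gives $H^6(M_0;\z_2)\cong\z_2$ (generated by $t^2q$), and $\z_2$-Poincar\'e duality on the closed $6$-manifold $M_0$ makes this evaluation an isomorphism.

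Putting the two computations together, $c$ is surjective with kernel exactly $[M_0, Y]$, and therefore $\mathscr S(M_0)\cong[M_0, Y]$. The main (and only) point requiring care is confirming that the top $\z_2$-cohomology class pairs nontrivially with $[M_0]$, which is immediate from Poincar\'e duality; beyond that, the author has essentially done all of the work in the setup.
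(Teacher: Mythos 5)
Your proposal is correct and follows the paper's own route exactly: the paper assembles the same ingredients ($L_7(\z_2^-)=0$, the identification $[M_0,G/O]\cong[M_0,G/PL]$, the Sullivan formula reducing $c$ to $\langle g^*k_6,[M_0]\rangle$, and the splitting $[M_0,G/PL]\cong[M_0,Y]\times[M_0,K(\z_2,6)]$) and then declares the lemma a consequence of the surgery exact sequence. You have merely made explicit the final step the paper leaves implicit---that $c$ restricted to $[M_0,K(\z_2,6)]=H^6(M_0;\z_2)\cong\z_2$ is the evaluation isomorphism, so $\ker c = [M_0,Y]$---which is a faithful and slightly more careful rendering of the same argument.
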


The projection $\pi \colon \cp^3 \to M_0$ induces a homomorphism
$\pi^* \colon [M_0, Y] \to [\cp^3, Y]$, and $[\cp^3,Y]$ is
isomorphic to $\z$ through the splitting invariant $s_4$
(\cite[Lemma 14C.1]{Wall}). Let $\Phi=s_4 \circ \pi^*$ be the
composition.

\begin{lemma}\label{lemma:exact}
There is a short exact sequence $\z_2 \to [M_0,Y]
\stackrel{\Phi}{\rightarrow} 2\z$.
\end{lemma}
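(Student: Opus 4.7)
The plan is to extract the short exact sequence from the principal fibration $K(\z,4) \to Y \to K(\z_2,2)$ defining $Y$, and then to compare it naturally along the double cover $\pi \colon \cp^3 \to M_0$. First I would invoke the Puppe sequence of this fibration, which yields for any CW complex $X$ the exact sequence
\[
H^1(X;\z_2) \xrightarrow{\delta Sq^2} H^4(X;\z) \to [X, Y] \to H^2(X;\z_2) \xrightarrow{\delta Sq^2} H^5(X;\z).
\]
For $X = M_0$ the two outer terms vanish: $Sq^2$ is zero on degree-one classes by dimension, and $H^5(M_0;\z)=0$ as noted in the proof of Lemma~2.4. For $X = \cp^3$ both $H^1$ and $H^5$ are themselves zero. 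Hence one obtains two short exact sequences, each of shape $0 \to \z \to \cdot \to \z_2 \to 0$, namely
\[
0 \to H^4(M_0;\z) \to [M_0, Y] \to H^2(M_0;\z_2) \to 0
\]
and the analogous one for $\cp^3$.

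Next I would pin down the $\cp^3$ extension using Wall's assertion that $[\cp^3,Y]\cong \z$ via $s_4$. Since $[\cp^3,Y]/H^4(\cp^3;\z)$ must be $\z_2$, the injection $H^4(\cp^3;\z) \hookrightarrow [\cp^3,Y]$ corresponds under $s_4$ to the index-two inclusion $2\z \hookrightarrow \z$; equivalently, $s_4$ sends the image of the canonical generator $x^2$ to $\pm 2$. This is where the ``$2$'' in the image of $\Phi$ comes from.

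The crux is then a snake-lemma chase in the morphism of short exact sequences induced by $\pi^*$. By Lemma~2.4 the induced map on $H^4$ is an isomorphism, while the induced map on $H^2$ is zero, since $H^2(M_0;\z_2)$ is generated by $t^2$ and $\pi^*(t)=0$ (as $\cp^3$ is simply connected). The snake lemma then gives $\ker(\pi^*)\cong \z_2$ and $\mathrm{coker}(\pi^*)\cong \z_2$. Composing with the isomorphism $s_4$, we conclude $\ker(\Phi)\cong \z_2$ and that the image of $\Phi$ is an index-two subgroup of $\z$; tracking the generator $u \in H^4(M_0;\z)$ around the diagram identifies this subgroup as $2\z$, giving the sequence. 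The one subtlety to watch is keeping the factor $2$ straight: forgetting the non-split extension on the $\cp^3$ side would make one mistake the image $2\z$ for $\z$.
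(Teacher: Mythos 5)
Your argument is correct, but it reaches the lemma by a genuinely different packaging than the paper, so a comparison is in order. The paper works element-wise: it invokes Sullivan's non-splitting of $L_4(1) \xrightarrow{\cdot 2} [\cp^2,Y] \to [\cp^1,Y]$ to characterize the even values of $s_4$ as exactly the classes $f$ with $p\circ f$ null-homotopic; it then deduces $\mathrm{Im}\,\Phi \subseteq 2\z$ from the vanishing of $H^2(M_0;\z_2)\to H^2(\cp^3;\z_2)$ (Lemma \ref{lemma:cohom}), gets surjectivity onto $2\z$ by factoring maps $\cp^3 \to K(\z,4)$ through $M_0$ via the $H^4$-isomorphism, and produces a nontrivial kernel element by lifting the map $M_0 \to K(\z_2,2)$ representing $t^2$ to $Y$ by obstruction theory. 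Your two Puppe short exact sequences plus the snake lemma consume literally the same cohomological inputs --- $\pi^*$ an isomorphism on $H^4$, zero on $H^2(\,\cdot\,;\z_2)$, and $H^5(M_0;\z)=0$, the last being precisely what makes the paper's lifting step work, since the obstruction is $\delta Sq^2(t^2)\in H^5(M_0;\z)$, i.e.\ exactly the surjectivity of $[M_0,Y]\to H^2(M_0;\z_2)$ in your top row --- but your organization buys two things. First, it yields the upper bound $|\ker\Phi|\le 2$ cleanly: the paper exhibits two distinct kernel elements but leaves implicit why there are no others, whereas in your diagram this is exactness of the $M_0$-row at $[M_0,Y]$ together with injectivity of $H^4(M_0;\z)\to [M_0,Y]$, for which your remark that $Sq^2$ vanishes on degree-one classes is the needed input. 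Second, it replaces the separate appeal to Sullivan's non-splitting by the extension argument from Wall's $[\cp^3,Y]\cong\z$; in fact your middle paragraph is logically superfluous, since once the snake lemma gives $\mathrm{coker}(\pi^*)\cong\z_2$ the image of $\Phi$ is an index-two subgroup of $\z$ and $2\z$ is the only one --- though it correctly records that your sequence is equivalent to the non-splitting statement the paper quotes. One hypothesis your chase uses silently and should state: all entries are abelian groups and all maps homomorphisms, which holds because $\delta Sq^2$ is a stable operation, so $Y$ is a loop space (indeed an infinite loop space at the prime $2$) and the fibration sequence consists of H-maps.
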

\begin{proof}
We have $[\cp^3, Y]=[\cp^2, Y]$, and according to Sullivan
\cite{Sullivan}, the exact sequence
$$L_4(1) \stackrel{\cdot 2}{\rightarrow} [\cp^2, Y] \to [\cp^1,Y]$$
is non-splitting. Let $p \colon Y \to K(\z_2,2)$ be the projection
map, then for any $f \in [\cp^3, Y]$, $s_4(f) \in 2\z$ if and only
if $p \circ f \colon \cp^3 \to K(\z_2,2)$ is null-homotopic. Now by
Lemma \ref{lemma:cohom}, the homomorphism $H^2(M_0;\z_2) \to
H^2(\cp^3;\z_2)$ is trivial. Therefore for any $g \in [M_0, Y]$, the
composition $p \circ g \circ \pi$ is null-homotopic, thus
$\mathrm{Im} \Phi \subset 2\z$. On the other hand, since $\pi^*
\colon H^4(M_0;\z) \to H^4(\cp^3)$ is an isomorphism, any map $f
\colon \cp^3 \to K(\z,4)$ factors through some $g' \colon M_0 \to
K(\z,4)$. Let $i \colon K(\z,4) \to Y$ be the fiber inclusion, since
$s_4(i\circ f)$ takes any value in $2\z$, so does $\Phi(i \circ
g')$.

Let $h \colon M_0 \to K(\z_2,2)$ be a map corresponding to the
nontrivial cohomology class in $H^2(M_0;\z_2)$. By obstruction
theory, there is a lifting $g \colon M_0 \to Y$. By the previous
argument, there is also a map $g' \colon M_0 \to Y$ such that
$\Phi(g)=\Phi(g')$, but $ p \circ g' \colon M_0 \to K(\z_2,2)$ is
null-homotopic. Therefore the kernel of $\Phi$ consists of two
elements.
\end{proof}

\begin{remark}
In \cite{Petrie} Petrie showed that every homotopy $\cp^3$ admits
free involution. It was pointed out by Dovermann, Masuda and Schultz
\cite[pp.~4]{DMS} that since the class $G$ is in fact twice the
generator of $H^4(S^4)$, Petrie's computation actually shows that
every $\hcp_{2k}$ admits free involution, which is consistent with
Lemma \ref{lemma:exact}.
\end{remark}

The set of diffeomorphism classes of manifolds homotopy equivalent
to $M_0$, $\mathcal M(M_0)$, is the orbit set $\mathscr
S(M_0)/Aut(M_0)$. In general, the determination of the action of
$Aut(M_0)$ on the structure set is very difficult. But in our case,
the situation is quite simple, since

\begin{lemma}\label{lemma:action}
The group of self-equivalences $Aut(M_0)$ is the trivial group.
\end{lemma}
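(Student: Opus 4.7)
The plan is to classify self-homotopy-equivalences $f \colon M_0 \to M_0$ via the second Postnikov map $f_2 \colon M_0 \to Q$, which was shown in Section 2 to be a $7$-equivalence. Since $M_0$ is a $6$-dimensional CW complex, a routine lifting argument up the Postnikov tower of the target $M_0$ yields a bijection $[M_0, M_0] \cong [M_0, Q]$: the obstructions and indeterminacies for lifting a map $M_0 \to P_n(M_0)$ to $P_{n+1}(M_0)$ lie in $H^{n+2}(M_0; \pi_{n+1}(M_0))$ and $H^{n+1}(M_0; \pi_{n+1}(M_0))$ respectively, both of which vanish for $n \ge 6$.

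Next I would analyze $[M_0, Q]$ via the fibration $\cpi \to Q \to \rpi$. A self-equivalence induces the identity on $\pi_1(M_0) = \z_2$, so $f_2 \circ f$ lies over the classifying map $M_0 \to \rpi$ of the universal cover. The set of such lifts is in bijection with sections of the pullback $f_2^{\ast} Q \to M_0$, equivalently with $H^2(M_0; \z^-)$, where $\z^-$ denotes $\z$ with its $\z[\z_2]$-module structure coming from the $\pi_1$-action on $\pi_2(M_0)$. A Leray--Serre spectral sequence computation for the covering $\cp^3 \to M_0 \to \rpi$ with $\z^-$ coefficients gives $H^2(M_0; \z^-) \cong \z$: since the $\pi_1$-action on $\pi_2$ is nontrivial, $\tau_0^{\ast}$ acts on $H^2(\cp^3) = \z$ by $-1$, and tensoring with $\z^-$ trivializes this, so $E_2^{0,2} = H^0(\rpi; H^2(\cp^3) \otimes \z^-) = \z$; the other contributions to the diagonal $p+q = 2$ and the relevant differentials vanish by inspection.

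Self-equivalences correspond to invertible elements of $\z$, namely $\pm 1$, giving at most two based self-equivalence classes. Passing to unbased classes, change of basepoint along the nontrivial loop in $\pi_1(M_0) = \z_2$ acts on the classifying invariant in $H^2(M_0; \z^-) = \z$ by $-1$ (since $\pi_1$ acts on $\pi_2$ by $-1$), identifying the two invertibles into a single orbit, that of the identity. Hence $Aut(M_0) = 1$.

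The main technical obstacle I anticipate is rigorously establishing the bijection $[M_0, M_0] \cong [M_0, Q]$ through the iterated Postnikov lifting, and carefully verifying the based-to-unbased identification via the $\pi_1$-action on the classifying cohomology group; the spectral sequence computation itself is a direct check once the twisted coefficients are set up correctly.
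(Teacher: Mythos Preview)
Your approach is valid and reaches the right conclusion, but it differs substantially from the paper's argument. The paper exploits a concrete CW structure: $M_0$ is an $\rp^2$-bundle over $S^4$, so it has a cell decomposition with $2$-skeleton equal to a fibre $\rp^2$. Given a self-equivalence $\varphi$, cellular approximation lets one assume $\varphi(\rp^2)\subset\rp^2$; one checks that $\varphi|_{\rp^2}$ is (freely) homotopic to $\mathrm{id}_{\rp^2}$, then uses homotopy extension and observes that the remaining obstructions to a homotopy $\varphi\simeq\mathrm{id}$ rel $\rp^2$ lie in groups $H^i(M_0,\rp^2;\pi_i(M_0))$, all of which vanish because $\pi_i(M_0)=0$ for $3\le i\le 6$ and the relative groups in degrees $1,2$ are zero. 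Your route via the Postnikov section $Q$ and the twisted cohomology $H^2(M_0;\z^-)$ is more homotopy-theoretic and avoids choosing a specific cell structure; the paper's route is shorter once the bundle description of $M_0$ is in hand. In essence both arguments hinge on the same two facts: the vanishing of $\pi_3,\dots,\pi_6$ and the fact that the $\pi_1$-action on $\pi_2$ identifies the two candidate $\pi_2$-degrees $\pm 1$.

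Two technical points in your outline deserve tightening. First, in the Leray--Serre computation for $H^2(M_0;\z^-)$ the differential $d_3\colon E_3^{0,2}\to E_3^{3,0}=\z_2$ is \emph{not} zero (it must kill $E_3^{3,0}$ since $H^3(M_0;\z^-)=0$); the kernel is still isomorphic to $\z$, so your conclusion survives, but ``vanish by inspection'' is incorrect. Second, the set of lifts is only a \emph{torsor} over $H^2(M_0;\z^-)$, so ``invertible elements of $\z$'' has no meaning until you identify the torsor with $\z$ via the $\pi_2$-degree; for that you need that $\pi^*\colon H^2(M_0;\z^-)\to H^2(\cp^3;\z)$ is injective (it is, by the long exact sequence coming from $0\to\z^-\to\z[\z_2]\to\z\to 0$), so that distinct lifts have distinct $\pi_2$-degrees and the equivalences are exactly those of degree $\pm1$.
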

\begin{proof}
A special CW-complex structure of $M_0$ was given in
\cite[pp.~885]{Lv-Li}: $M_0$ is a $\rp^2$-bundle over $S^4$,
therefore it is the union of two copies of $\rp^2 \times D^4$, glued
along boundaries. Choose a CW-complex structure of $\rp^2$, we have
a product CW-structure on one copy of $ \rp^2 \times D^4$, and by
shrinking the other copy of $\rp^2 \times D^4$ to the core $\rp^2$,
we get a CW-complex structure on $M_0$, whose $2$-skeleton is
$\rp^2$.

Let $\varphi \in Aut(M_0)$ be a self homotopy equivalence of $M_0$.
By cellular approximation, we may assume that $\varphi$ maps $\rp^2$
to $\rp^2$. It is easy to see that $\varphi|_{\rp^2}$ is homotopic
$\mathrm{id}_{\rp^2}$. Therefore, by homotopy extension, we may
further assume that $\varphi|_{\rp^2}=\mathrm{id}_{\rp^2}$. The
obstruction to construct a homotopy between $\varphi$ and
$\mathrm{id}_{M_0}$, which is the identity on $\rp^2$, is in
$H^i(M,\rp^2;\pi_i(M_0))$. Since $\pi_i(M_0)=0$ for $3 \le i \le 6$
and $H^1(M_0,\rp^2;\z_2)=H^2(M_0,\rp^2;\z)=0$, all the obstruction
groups are zero. Therefore $\varphi \simeq \mathrm{id}_{M_0}$.
\end{proof}

Combine Lemma \ref{lemma:str}, Lemma \ref{lemma:exact} and Lemma
\ref{lemma:action}, we have a classification of manifolds homotopy
equivalent to $M_0$.

\begin{theorem}\label{thm:two}
Let $M^6$ be a smooth manifold homotopy equivalent to $M_0$, then
$p_1(M)=(48j+4)u$, where $u\in H^4(M;\z)$ is the canonical
generator; for each $j \in \z$, up to diffeomorphism, there are two
such manifolds with the same $p_1=48j+4$.
\end{theorem}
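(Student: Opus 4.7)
The plan is to glue together Lemmas \ref{lemma:str}, \ref{lemma:exact}, and \ref{lemma:action}, and then identify the splitting invariant $\Phi$ with the first Pontrjagin class of the double cover. Since $Aut(M_0)$ is trivial by Lemma \ref{lemma:action}, the orbit set $\mathcal M(M_0)$ equals the structure set $\mathscr S(M_0)$. Combined with $\mathscr S(M_0)\cong [M_0,Y]$ from Lemma \ref{lemma:str} and the short exact sequence $\z_2\to [M_0,Y]\stackrel{\Phi}{\to}2\z$ from Lemma \ref{lemma:exact}, we see that $\mathcal M(M_0)$ is a $\z_2$-extension of $2\z$ via $\Phi$, so the counting statement ``two manifolds per value of $p_1$'' will follow as soon as $\Phi$ is interpreted in terms of Pontrjagin classes.

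To do that, I would pick a homotopy equivalence $f\colon N\to M_0$ and pass to its lift $\w f\colon \w N\to \cp^3$ between the double covers; by construction $\Phi(f)=s_4(\w f)$. The classical fact (cf.~\cite[\S 14C]{Wall}, \cite{Sullivan}) is that for a homotopy equivalence between homotopy $\cp^3$'s the splitting invariant agrees with the Pontrjagin-class difference $(p_1(\w N)-p_1(\cp^3))/24$. Since $p_1(\cp^3)=4x^2$, the equation $\Phi(f)=2k$ then forces $p_1(\w N)=(48k+4)x^2$, i.e.\ $\w N\cong H\cp^3_{2k}$.

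Next I would transfer this information to $N$ itself. The argument in Section 2 showing that $\pi^*\colon H^4(M)\to H^4(\w M)$ is an isomorphism applies verbatim to $N$, so the canonical generator $u\in H^4(N)$ satisfies $\pi^*u=x^2$. Naturality of Pontrjagin classes under the covering $\pi\colon \w N\to N$ gives $\pi^*p_1(N)=p_1(\w N)=(48k+4)\pi^*u$, and the injectivity of $\pi^*$ forces $p_1(N)=(48k+4)u$. Conversely, the surjection $\Phi\colon [M_0,Y]\twoheadrightarrow 2\z$ from Lemma \ref{lemma:exact} realizes every $k\in\z$, and the $\z_2$ kernel of $\Phi$ produces exactly two non-diffeomorphic smooth manifolds per value of $p_1$.

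The main obstacle is the geometric identification of $s_4$ with the Pontrjagin-class difference in the middle step: once that Sullivan--Wall formula is cited precisely, everything else is a formal bookkeeping exercise using the three lemmas above and the cohomological computations of Section 2.
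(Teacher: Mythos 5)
Your proposal is correct and follows essentially the same route as the paper: the paper's proof of Theorem \ref{thm:two} consists precisely of combining Lemmas \ref{lemma:str}, \ref{lemma:exact} and \ref{lemma:action}, with the identification of $\Phi$ (via $s_4$ on the double cover) with the Pontrjagin class $p_1=(24\cdot s_4+4)x^2$ left implicit, which you correctly fill in. Your additional step transferring $p_1$ from $\w N$ to $N$ via the isomorphism $\pi^*\colon H^4(N)\to H^4(\w N)$ is exactly the content of the paper's Section 2 computation and completes the bookkeeping soundly.
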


Theorem \ref{thm:one} and Corollary \ref{coro:one} are direct
consequences of this theorem.


\begin{thebibliography}{99}
\bibitem{Baues} H.~J.~Bause, \emph{Obstruction Theory on Homotopy Classification of
Maps}, Lecture Notes in Mathematics 628, Springer-Verlag, 1977.

\bibitem{Brown} K.~S.~Brown, \emph{Cohomology of Groups}, Springer,
1982.

\bibitem{DMS} K.~H.~Dovermann, M.~Masuda, R.~Schultz, \emph{Conjugation involutions on homotopy complex projective
spaces}, Jpn.~J.~Math.~{\bf{12}} (1986), 1每35.

\bibitem{Lv-Li} B.~H.~Li, Z.L\"u, \emph{Smooth free involution of $H\cp^3$ and Smith conjecture
for imbeddings of $S^3$ in $S^6$}, Math.~Ann.~{\bf{339}} (2007)
879每889.

\bibitem{M-Y} D.~Montgomery, C.~T.~Yang, \emph{Differentiable actions on homotopy seven
spheres}, Trans.~Am.~Math.~Soc.~{\bf{122}} (1966), 480每498.

\bibitem{Olbermann} M.~Olbermann, \emph{Conjugations on
6-manifolds}, Math.~Ann.~(2008) {\bf{342}}, 255每271.

\bibitem{Petrie} T.~Petrie, \emph{Involutions on homotopy complex projective spaces and related topics}, Lecture Notes in
Math.~Vol.~{\bf{298}}, 234每259, Springer, New York. (1972)

\bibitem{Su} Y.~Su, \emph{Free involutions on $S^2 \times S^3$},
arXiv:0907.2800v1.


\bibitem{Sullivan} D.~P.~Sullivan, \emph{Triangulating and smoothing homotopy equivalences and
homeomorphisms},

\bibitem{Wall6} C.~T.~C.~Wall, \emph{Classification Problems in Differential Topology V}, Invent.~math.~{\bf{1}} (1966) 355-374.

\bibitem{Wall} C.~T.~C.~Wall, \emph{Surgery on Compact Manifolds}, Second edition, Edited and with a foreword by
A.~A.~Ranicki, Mathematical Surveys and Monographs, 69, American
 Mathematical Society, Providence, RI, 1999.


\end{thebibliography}
\end{document}